\documentclass{amsart}
\usepackage{mathrsfs}
\usepackage{amssymb}
\usepackage{amsmath}
\def\N{\mathbb{N}}

\theoremstyle{definition}
\newtheorem{definition}{Definition}
\newtheorem{theorem}[definition]{Theorem}
\newtheorem{lemma}[definition]{Lemma}

\newtheorem{corollary}[definition]{Corollary}
\newtheorem{proposition}[definition]{Proposition}

\author[Samuel A.~Alexander]{Samuel A.~Alexander$^1$}
\thanks{$^1$Email: alexander@math.ohio-state.edu}
\thanks{MSC: 91A05. Keywords: game theory, perfect information, Borel hierarchy, guessability, descriptive set theory}

\begin{document}

\title{Highly Lopsided Information and the Borel Hierarchy}

\begin{abstract}
In a game where both contestants have perfect information, there is a 
strict limit on how perfect that
information can be.  By contrast, when one player is deprived of all information, the limit on the other
player's information disappears, admitting a hierarchy of levels of lopsided perfection of information.
We turn toward the question of when the player with super-perfect information has a winning strategy,
and we exactly answer this question for a specific family of lopsided-information games which we call guessing games.
\end{abstract}

\maketitle

\section{Introduction}

Suppose Alice and Bob are playing an infinite game together and Alice has no information at all about what
moves Bob makes.
Formally this means Alice is only permitted to use strategies which do not depend on Bob's moves.
Under this strong restriction, a strategy for Alice is really just a fixed move-sequence:
loosely speaking, she decides all of her moves before the game ever begins.
This opens the possibility for Bob to have ``better than perfect''
information.  At the highest extreme, we could allow Bob to know Alice's entire move-sequence
before he even makes his first move.  Between that and what is normally called ``perfect'' information,
there is a hierarchy of possible perfection.  If Bob has perfect information in the traditional sense \cite{gale} of
the word, then for every move he makes, he obtains a single $\Delta_0$ fact about Alice's move-sequence.
By contrast,
we could allow that with every move he makes, one $\Delta_1$
fact is revealed to Bob, or one $\Delta_2$ fact, etc.

Suppose $S\subseteq \N^{\N}$ is a fixed subset of Baire space.
The \emph{guessing game} for $S$
is as follows.  
Alice chooses a sequence $f:\N\to\N$, which may or may not be in $S$.
The fact that this is chosen in advance corresponds to Alice having no information about Bob's moves.
Now Bob tries to ``guess'' (formally: play
$1$ or $0$) whether or not $f$ is in $S$.  The terms of Alice's sequence are revealed to him
one-by-one and he gets to revise his guess with each revelation, and he wins if his guesses converge to
the correct answer, otherwise Alice wins.  If Bob has a winning strategy, $S$ is said to be guessable.  In an earlier
paper \cite{alexander}, I demonstrated that $S$ is 
guessable if and only if $S\in\mathbf{\Delta}_2^0$, the boldface
pointclass of the Borel hierarchy.  I will generalize this result to higher-order guessing games
which correspond to one player having more and more lopsided information.

The way in which Alice is forced, by lack of information, to choose her moves
before the game begins, bears some resemblance to an auxiliary game invented by Donald
Martin \cite{martin} in which, at a certain point, one player plays a quasi-strategy and in so doing
locks himself into always playing within that quasi-strategy.

\section{Zeroth-Order Guessability}

In this section I will formally introduce guessability.
The basic definition does not clearly generalize to higher orders, so
an equivalent definition will be proved (using some basic first-order
logic) which generalizes more smoothly.

\begin{definition}
Suppose $S\subseteq \N^{\N}$.  A function $G:\N^{<\N}\to\N$ is said to \emph{guess} $S$
(and we say $G$ is a \emph{guesser} for $S$) if and only if, for every $f\in\N^{\N}$,
\[
\lim_{n\to\infty}G(f(0),\ldots,f(n))=
\left\{\begin{array}{l}
\mbox{$1$, if $f\in S$;}\\ \mbox{$0$, if $f\not\in
S.$}\end{array}\right.\]
If any such $G$ exists, we say $S$ is \emph{guessable}.
\end{definition}

\begin{lemma}
\label{guessinggametheory}
Let $S\subseteq\N^{\N}$.
Suppose Alice and Bob are playing natural numbers, that Bob has perfect information (in the usual sense)
but Alice has no information, and that Bob wins if either
\begin{enumerate}
\item Alice's move-sequence is in $S$ and Bob's moves are eventually always $1$, or
\item Alice's move-sequence is not in $S$, and Bob's moves are eventually always $0$.
\end{enumerate}
Then Bob has a winning strategy if and only if $S$ is guessable.
\end{lemma}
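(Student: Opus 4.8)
The plan is to exhibit a direct correspondence between guessers for $S$ and strategies for Bob, and then to check that under this correspondence a guesser is exactly a winning strategy. The key observation is that, because Alice has no information, her ``strategy'' carries no freedom beyond the choice of a single sequence $f\in\N^{\N}$: she commits to $f$ in advance and plays $f(0),f(1),\ldots$ regardless of Bob. Meanwhile Bob has perfect information, so at the stage where he produces his $n$-th move he has already seen $f(0),\ldots,f(n)$, and a strategy for him is therefore naturally encoded as a function $\sigma:\N^{<\N}\to\N$, with $\sigma(f(0),\ldots,f(n))$ being the move Bob makes in response to that partial play. (If one's convention has Bob move before seeing $f(n)$, his $n$-th move is instead $\sigma(f(0),\ldots,f(n-1))$; this merely shifts the index by one and affects no limit, so I will suppress it.) Thus guessers and Bob-strategies are literally the same type of object, namely functions $\N^{<\N}\to\N$, and the whole lemma reduces to matching the two success conditions.

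First I would treat the direction that guessability implies a winning strategy. Suppose $G$ guesses $S$, and let Bob play the strategy $\sigma=G$. Fix any sequence $f$ that Alice might commit to. By definition of a guesser, $\lim_{n}G(f(0),\ldots,f(n))$ equals $1$ if $f\in S$ and $0$ if $f\notin S$. Since Bob's moves form a sequence of natural numbers, and an integer-valued sequence converges to $k$ precisely when it is eventually constantly equal to $k$, Bob's moves are eventually always $1$ when $f\in S$ and eventually always $0$ when $f\notin S$. In either case Bob meets his winning condition; since $f$ was arbitrary this holds against every play of Alice's, so $\sigma$ is a winning strategy.

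The converse is the mirror image. Suppose $\sigma$ is a winning strategy for Bob, and define $G=\sigma$. For any $f\in\N^{\N}$, consider the play in which Alice commits to $f$; because $\sigma$ wins, Bob's moves are eventually always $1$ if $f\in S$ and eventually always $0$ if $f\notin S$. Reading this through the same ``eventually constant iff convergent'' equivalence for integer sequences gives $\lim_{n}G(f(0),\ldots,f(n))=1$ when $f\in S$ and $=0$ when $f\notin S$, which is exactly the guesser condition. Hence $G$ guesses $S$, so $S$ is guessable.

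I do not expect any genuinely hard step: once the objects are identified, each direction is a one-line translation. The only point demanding care—the true \emph{main obstacle}, such as it is—is the bookkeeping in the first paragraph: pinning down precisely what a strategy is for each player given their asymmetric information (so that Bob-strategies really are functions on finite initial segments of Alice's sequence), and confirming that the timing convention lines up the argument of $\sigma$ with the argument of $G$. After that, the equivalence of ``converges to $k$'' and ``is eventually constantly $k$'' for integer sequences does all the remaining work.
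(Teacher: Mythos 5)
Your proof is correct and follows essentially the same route as the paper's: both directions simply identify guessers with Bob-strategies as functions $\N^{<\N}\to\N$ and translate the winning condition via the equivalence of convergence and eventual constancy for integer-valued sequences. Your extra care about timing conventions and the reduction of Bob's strategy to a function of Alice's partial play is sound bookkeeping that the paper leaves implicit.
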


\begin{proof}
If Bob has a winning strategy, then define $G(f(0),\ldots,f(n))$
to be the $n$th move Bob makes according to that strategy, assuming
Alice's move-sequence begins with $(f(0),\ldots,f(n))$.
Conversely, if $S$ is guessable, say with guesser $G$,
a winning strategy for Bob's $n$th move is to play $G(f(0),\ldots,f(n))$
where $f(i)$ denotes Alice's $i$th move.
\end{proof}

In \cite{alexander} I showed that the guessable sets are precisely the $\mathbf{\Delta_2^0}$ sets;
this will also be a special case of a later theorem in the present paper.

I want to give an alternate characterization of guessability which will generalize more easily.
This will require some technical machinery from basic logic.

\begin{definition}
\hfill
\begin{itemize}
\item
By $\mathscr{L}_{\mbox{max}}$ I mean the first-order language
which has a constant symbol $\overline{n}$ for every $n\in\N$; an $n$-ary function
symbol $\tilde{w}$ for every function $w:\N^n\to\N$;
an $n$-ary predicate symbol $\tilde{p}$ for every subset $p\subseteq\N^n$;
a special unary function symbol $\mathbf{f}$; and, for every
function $G:\N^n\times \N^{<\N}\to\N$, an $(n+1)$-ary function symbol $G\circ \mathbf{f}$.
\item
For any $f:\N\to\N$, the structure $\mathscr{M}_f$ for the language $\mathscr{L}_{\mbox{max}}$
is defined as follows.  It has universe $\N$.  It interprets $\overline{n}$, $\tilde{w}$, and
$\tilde{p}$ in the obvious ways.  It interprets $\mathbf{f}$ as $f$.  And for every 
$G:\N^n\times \N^{<\N}\to\N$, $\mathscr{M}_f$ interprets $G\circ\mathbf{f}$ as the function
\[
(G\circ \mathbf{f})^{\mathscr{M}_f}(m_1,\ldots,m_n,m)= G(m_1,\ldots,m_n,f(0),\ldots,f(m)).\]
\item
If $\phi$ is an $\mathscr{L}_{\mbox{max}}$-sentence and $\sigma\in\N^{<\N}$, say that $\phi$
is \emph{determined} by $\sigma$ if for every $f,g\in\N^{\N}$ which
extend $\sigma$, $\mathscr{M}_f\models\phi$ iff $\mathscr{M}_g\models\phi$.
\end{itemize}
\end{definition}

In the following lemma and all of the paper, by ``quantifier-free'' I really mean it:
not even bounded quantifiers are allowed (by contrast, some authors take ``quantifier-free''
to mean ``all quantifiers bounded'').

\begin{lemma}
\label{qfdetermined}
If $\phi$ is a quantifier-free $\mathscr{L}_{\mbox{max}}$-sentence and $f:\N\to\N$, then there
is some $k$ big enough that $(f(0),\ldots,f(k))$ determines $\phi$.
\end{lemma}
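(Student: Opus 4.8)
The plan is to exploit the fact that a quantifier-free sentence---with no quantifiers at all, hence no variables---is a Boolean combination of finitely many atomic sentences, each of the form $\tilde{p}(t_1,\ldots,t_n)$ or $t_1=t_2$ where the $t_i$ are \emph{closed} terms (any variable appearing without a binder would be free, contradicting sentence-hood). Since there are only finitely many such atomic subformulas, and a Boolean combination's truth value is fixed once the truth values of its atoms are fixed, it suffices to show that for each closed term $t$ occurring in $\phi$ there is a $k_t$ such that every $g$ agreeing with $f$ on $(f(0),\ldots,f(k_t))$ yields the same value $t^{\mathscr{M}_g}=t^{\mathscr{M}_f}$; then $k=\max_t k_t$, taken over the finitely many relevant terms, will determine $\phi$.

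The heart of the argument is an induction on the structure of the closed term $t$, carried out \emph{after} fixing $f$. For $t=\overline{n}$ the value is $n$ regardless of $g$, so any $k_t$ works. For $t=\tilde{w}(s_1,\ldots,s_n)$ I would take $k_t=\max_i k_{s_i}$: once each subterm value is pinned down, $w$ returns the same output. The two interesting cases are those involving $f$. For $t=\mathbf{f}(s)$ I first compute $m=s^{\mathscr{M}_f}$ using the fixed $f$, then set $k_t=\max(k_s,m)$; any $g$ agreeing with $f$ up to $k_t$ satisfies $s^{\mathscr{M}_g}=m$ and $g(m)=f(m)$, so $t^{\mathscr{M}_g}=g(m)=f(m)=t^{\mathscr{M}_f}$. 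For $t=(G\circ\mathbf{f})(s_1,\ldots,s_n,s)$ I similarly compute $m_i=s_i^{\mathscr{M}_f}$ and $m=s^{\mathscr{M}_f}$, then set $k_t=\max(k_{s_1},\ldots,k_{s_n},k_s,m)$; agreement of $g$ with $f$ up to $k_t$ forces both the subterm values and the initial segment $g(0),\ldots,g(m)=f(0),\ldots,f(m)$, whence $G(m_1,\ldots,m_n,g(0),\ldots,g(m))=G(m_1,\ldots,m_n,f(0),\ldots,f(m))$.

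Finally I would lift this to atomic sentences (for $t_1=t_2$ and $\tilde{p}(t_1,\ldots,t_n)$ take the max of the bounds of the constituent terms) and then to $\phi$ by taking the maximum over its finitely many atomic subformulas. Since $f$ itself extends $(f(0),\ldots,f(k))$, establishing ``$\mathscr{M}_g\models\phi$ iff $\mathscr{M}_f\models\phi$ for every $g$ extending the segment'' immediately yields the two-sided determination demanded by the definition.

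I expect the main obstacle to be conceptual rather than computational: there is an apparent circularity, in that the points at which a term queries $f$ (via $\mathbf{f}$ or $G\circ\mathbf{f}$) themselves depend on $f$, so it is not obvious a priori how far $f$ must be fixed. The resolution, which the induction makes precise, is that the lemma quantifies $f$ before $k$, so I may evaluate the relevant subterms in the already-fixed structure $\mathscr{M}_f$ to extract the concrete query points $m$ and then take $k_t$ at least as large as every such $m$. Stating the induction hypothesis relative to this fixed $f$---rather than uniformly over all inputs---is the one place where care is genuinely needed.
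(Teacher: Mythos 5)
Your proof is correct and follows essentially the same route as the paper's: an induction on the structure of closed terms, where the key move is to evaluate the query points (such as $u_n^{\mathscr{M}_f}$ or your $m=s^{\mathscr{M}_f}$) in the already-fixed structure $\mathscr{M}_f$ and fold them into the bound $k$, followed by lifting to the quantifier-free sentence through its finitely many atomic subformulas. The paper presents only the $(G\circ\mathbf{f})$ case of the term induction and omits the rest, but what it shows is exactly your argument, including the same choice of $k$ as a maximum over the inductive bounds and the computed query point.
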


\begin{proof}
First, I claim that if $t$ is an $\mathscr{L}_{\mbox{max}}$-term with no free variables
then there is a $k$ large enough that $t^{\mathscr{M}_g}=t^{\mathscr{M}_f}$ whenever $g\in\N^{\N}$
extends $(f(0),\ldots,f(k))$.
This is a straightforward induction on the complexity of $t$.
I omit most cases, but just for one example, suppose $t$ is $(G\circ \mathbf{f})(u_0,\ldots,u_n)$ where $G:\N^n\times \N^{<\N}\to\N$
and where $u_0,\ldots,u_n$ are simpler terms.
By induction, find $k_0,\ldots,k_n$ such that each $u_i^{\mathscr{M}_g}=u_i^{\mathscr{M}_f}$ whenever $g\in\N^{\N}$
extends $(f(0),\ldots,f(k_i))$.
Let $k=\max\{k_0,\ldots,k_n,u_n^{\mathscr{M}_f}\}$.
Suppose $g\in\N^{\N}$ extends $(f(0),\ldots,f(k))$.
Then
\begin{align*}
(G\circ \mathbf{f})(u_0,\ldots,u_n)^{\mathscr{M}_g}
&=
(G\circ \mathbf{f})^{\mathscr{M}_g}(u_0^{\mathscr{M}_g},\ldots,u_n^{\mathscr{M}_g})\\
&=
G(u_0^{\mathscr{M}_g},\ldots,u_{n-1}^{\mathscr{M}_g},g(0),\ldots,g(u_n^{\mathscr{M}_g}))\\
&=
G(u_0^{\mathscr{M}_f},\ldots,u_{n-1}^{\mathscr{M}_f},g(0),\ldots,g(u_n^{\mathscr{M}_f}))\\
&=
G(u_0^{\mathscr{M}_f},\ldots,u_{n-1}^{\mathscr{M}_f},f(0),\ldots,f(u_n^{\mathscr{M}_f}))\\
&=
(G\circ \mathbf{f})(u_0,\ldots,u_n)^{\mathscr{M}_f},\end{align*}
as desired.

From this, the lemma follows by induction on the complexity of $\phi$.
\end{proof}

The above lemma would still hold if we allowed bounded quantifiers, but 
the proof would be more complicated.

\begin{corollary}
\label{opensets}
If $\phi$ is a quantifier-free $\mathscr{L}_{\mbox{max}}$-sentence then \[[\phi]:=\{f\in\N^{\N}\,:\,\mathscr{M}_f\models\phi\}\]
is a clopen subset of Baire space.
\end{corollary}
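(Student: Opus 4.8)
The plan is to deduce this directly from Lemma~\ref{qfdetermined} together with the standard fact that the sets $U_\sigma := \{g\in\N^{\N} : g \text{ extends } \sigma\}$, for $\sigma\in\N^{<\N}$, form a basis of clopen sets for Baire space. Recall that a subset of a topological space is clopen precisely when both it and its complement are open, and that a set is open precisely when each of its points has a basic neighborhood contained in the set. So it suffices to produce, for each $f\in\N^{\N}$, a basic neighborhood of $f$ lying entirely inside $[\phi]$ or entirely inside its complement, according to whether $\mathscr{M}_f\models\phi$.

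First I would fix a quantifier-free sentence $\phi$ and an arbitrary $f\in\N^{\N}$. By Lemma~\ref{qfdetermined} there is some $k$ so that the finite sequence $\sigma:=(f(0),\ldots,f(k))$ determines $\phi$. By the definition of ``determined,'' every $g\in\N^{\N}$ extending $\sigma$ satisfies $\mathscr{M}_g\models\phi$ iff $\mathscr{M}_f\models\phi$; in other words, the truth value of $\phi$ is constant on $U_\sigma$.

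The payoff is that this single observation handles $[\phi]$ and its complement simultaneously, because the determination condition is symmetric in the two truth values. If $f\in[\phi]$ then $U_\sigma\subseteq[\phi]$, exhibiting a basic neighborhood of $f$ inside $[\phi]$; if instead $f\notin[\phi]$, then $U_\sigma\subseteq\N^{\N}\setminus[\phi]$, exhibiting a basic neighborhood of $f$ inside the complement. Since $f$ was arbitrary, every point of $[\phi]$ has a neighborhood inside $[\phi]$ and every point of the complement has a neighborhood inside the complement, so both sets are open and $[\phi]$ is clopen.

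I do not expect any real obstacle here, since Lemma~\ref{qfdetermined} already carries all the weight; the only point to get right is to exploit the symmetry of the determination condition, so that openness of $[\phi]$ and openness of its complement fall out of the very same argument. An alternative packaging would be to first show that every $[\phi]$ is open and then apply this to the quantifier-free sentence $\neg\phi$, using $[\neg\phi]=\N^{\N}\setminus[\phi]$ to conclude that the complement is open as well; this works equally well, but the symmetric argument above avoids even introducing $\neg\phi$.
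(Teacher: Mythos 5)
Your proof is correct and takes essentially the same approach as the paper: both arguments rest entirely on Lemma~\ref{qfdetermined} producing a determining initial segment, hence a basic neighborhood on which the truth value of $\phi$ is constant. The only cosmetic difference is that the paper obtains closedness by applying openness to the quantifier-free sentence $\neg\phi$ via $[\phi]^c=[\neg\phi]$, while you exploit the symmetry of the determination condition to handle $[\phi]$ and its complement in one stroke --- an alternative packaging you yourself identify.
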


\begin{proof}
Openness is by Lemma~\ref{qfdetermined}.
Closure follows since
$[\phi]^c=[\neg\phi]$.
\end{proof}

If $f:\N\to\N$ and if $\phi$
is an $\mathscr{L}_{\mbox{max}}$-sentence, I will write $f(\phi)$ for
the number
\[
f(\phi) = \left\{\begin{array}{l}
\mbox{$1$ if $\mathscr{M}_f\models \phi$;}\\
\mbox{$0$ otherwise.}\end{array}\right.\]
In other words, $f(\phi)=1$ if and only if $f\in [\phi]$.

\begin{proposition}
\label{guessabilityalternative}
Suppose $S\subseteq\N^{\N}$.
Then $S$ is guessable if and only if there exists a countable set $\Sigma$ of symbols
of $\mathscr{L}_{\mbox{max}}$, a
listing $\phi_0,\phi_1,\ldots$ of all the quantifier-free sentences
of $\mathscr{L}_{\mbox{max}}\cap\Sigma$, and a function $G:\{0,1\}^{<\N}\to\N$
such that for every $f:\N\to\N$,
\[
\lim_{n\to\infty} G(f(\phi_0),\ldots f(\phi_n))
=
\left\{\begin{array}{l}
\mbox{$1$, if $f\in S$;}\\ \mbox{$0$, if $f\not\in
S.$}\end{array}\right.\]
\end{proposition}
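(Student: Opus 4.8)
The plan is to prove both directions by relating the new $\mathscr{L}_{\mbox{max}}$-based characterization to the original definition of guessability via the clopen sets $[\phi]$ supplied by Corollary~\ref{opensets}.

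For the forward direction, I would start from a guesser $G_0:\N^{<\N}\to\N$ for $S$ (which exists by Definition of guessability). The task is to manufacture a countable $\Sigma$, an enumeration $\phi_0,\phi_1,\ldots$ of the quantifier-free sentences of $\mathscr{L}_{\mbox{max}}\cap\Sigma$, and a function $G:\{0,1\}^{<\N}\to\N$ so that the guesses over the truth-values $f(\phi_n)$ still converge correctly. The key observation is that each coordinate-reading move can be encoded by a quantifier-free sentence: for instance, the quantifier-free atomic sentence asserting $\mathbf{f}(\overline{n})=\overline{m}$ (built using the function symbol $\mathbf{f}$ and constants) holds in $\mathscr{M}_f$ exactly when $f(n)=m$, so from the values $f(\phi_0),\ldots,f(\phi_N)$ one can recover a suitable prefix $(f(0),\ldots,f(k))$ and then feed it into $G_0$. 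I would take $\Sigma$ to consist of just the symbols $\overline{n}$, $\mathbf{f}$, and enough relation symbols to express equalities, so that $\mathscr{L}_{\mbox{max}}\cap\Sigma$ is countable and its quantifier-free sentences can be listed in a way that eventually encodes every coordinate of $f$. The function $G$ then, given a binary string, decodes whatever prefix of $f$ is determined and outputs $G_0$ of that prefix.

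For the converse, suppose we are given $\Sigma$, the listing $\phi_0,\phi_1,\ldots$, and $G$ with the stated convergence property. I want to build an ordinary guesser $G_0:\N^{<\N}\to\N$. Here Lemma~\ref{qfdetermined} is the crucial tool: each $\phi_n$ is quantifier-free, so for any $f$ there is a finite prefix $(f(0),\ldots,f(k_n))$ that determines $f(\phi_n)$. The idea is that, given a prefix $(f(0),\ldots,f(m))$ of Alice's sequence, I can compute $f(\phi_n)$ for all $n$ whose determining prefix has length $\le m$, and output $G$ applied to those computed truth-values. As $m\to\infty$, more and more of the $f(\phi_n)$ become correctly computable, and the convergence of $G(f(\phi_0),\ldots,f(\phi_n))$ transfers to convergence of $G_0(f(0),\ldots,f(m))$ to the correct answer.

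The main obstacle I anticipate is bookkeeping the interaction between the two indexings in the converse direction: the limit hypothesis is a limit in $n$ (the number of sentences considered), but a guesser's limit is in $m$ (the length of Alice's prefix), and the determining length $k_n$ from Lemma~\ref{qfdetermined} depends on both $n$ and $f$ with no uniform bound. I would handle this by a diagonal/dovetailing argument: on prefix $(f(0),\ldots,f(m))$, let $N(m)$ be the largest $N$ such that every $\phi_0,\ldots,\phi_N$ is determined by $(f(0),\ldots,f(m))$, define $G_0(f(0),\ldots,f(m)) = G(f(\phi_0),\ldots,f(\phi_{N(m)}))$, and check that $N(m)\to\infty$ as $m\to\infty$ for each fixed $f$, so that the inner argument eventually tracks the true tail of the $f(\phi_n)$ sequence and inherits its limit. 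Verifying that $N(m)\to\infty$ is exactly where Lemma~\ref{qfdetermined} is applied, since it guarantees each $\phi_n$ is determined by some finite prefix.
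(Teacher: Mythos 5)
Your proposal is correct and takes essentially the same route as the paper's own proof: in the forward direction you encode the coordinates of $f$ by atomic sentences $\mathbf{f}(\overline{n})=\overline{m}$ over the symbol set $\{\mathbf{f}\}\cup\{\overline{n}\}_{n\in\N}$ and decode a maximal prefix to feed to $G_0$, and in the converse direction you apply $G$ to the truth values of the longest determined initial segment $\phi_0,\ldots,\phi_{N(m)}$, using Lemma~\ref{qfdetermined} to show $N(m)\to\infty$, exactly as the paper does. The only bookkeeping detail to add is a finite cap on $N(m)$ (the paper requires $k\leq n$), so that it remains well-defined in the degenerate case where every listed sentence is determined by the given prefix.
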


\begin{proof}
($\Rightarrow$) Suppose $S$ is guessable, say with guesser $G_0:\N^{<\N}\to\N$.
Let $\Sigma$ be the symbol-set containing $\mathbf{f}$ and $\overline{n}$ for every $n\in\N$.
Let $\phi_0,\phi_1,\ldots$ be any listing of the quantifier-free $\mathscr{L}_{\mbox{max}}\cap\Sigma$ sentences.
Define $G:\{0,1\}^{<\N}\to\N$ as follows.
Suppose $(p_0,\ldots,p_n)\in\{0,1\}^{<\N}$.
Say a formula $\phi$ \emph{appears} if $\phi=\phi_i$ for some $i\leq n$ and $p_i=1$.
Find a maximum-length sequence $(n_0,\ldots,n_k)$
such that for each $i=0,\ldots,k$, the formula $\mathbf{f}(\overline{i})=\overline{n_i}$ appears,
and let $G(p_0,\ldots,p_n)=G_0(n_0,\dots,n_k)$; if $(n_0,\ldots,n_k)$ is not uniquely
determined or no such nonempty sequence exists, let $G(p_0,\ldots,p_n)=0$.

I claim this witnesses the theorem's conclusion.  Let $f\in S$.
Since $G_0$ guesses $S$, find $n_0$ so big that  $\forall n\geq n_0$,
$G_0(f(0),\ldots,f(n))=1$.  Since $\phi_0,\phi_1,\ldots$ is an exhaustive list, there is some $k_0$ so big
that $(\phi_0,\ldots,\phi_{k_0})$ includes all the sentences $\mathbf{f}(\overline{i})=\overline{f(i)}$ for $i=1,\ldots,n_0$.
For each such $i$, $f(\mathbf{f}(\overline{i})=\overline{f(i)})=1$,
so each formula $\mathbf{f}(\overline{i})=\overline{f(i)}$ \emph{appears} in the definition
of $G(f(\phi_0),\ldots,f(\phi_k))$ for any $k>k_0$.
So for any $k>k_0$, $G(f(\phi_0),\ldots,f(\phi_k))=G_0(f(0),\ldots,f(n))$ for some $n\geq n_0$, so equals $1$.
So $G(f(\phi_0),\ldots,f(\phi_k))\to 1$ as $k\to\infty$, as desired.  A similar argument goes for the case $f\not\in S$.

($\Leftarrow$) Suppose $\Sigma$, $\phi_0,\phi_1,\ldots$, and $G_0:\{0,1\}^{<\N}\to\N$ are as in the theorem's conclusion.
Define $G:\N^{<\N}\to\N$ as follows.
Given any sequence $(m_0,\ldots,m_n)\in\N^{<\N}$, let $k\leq n$ be maximal such that for every $i=0,\ldots,k$,
$\phi_i$ is determined by $(m_0,\ldots,m_n)$ (if there is no such $k$, arbitrarily define
$G(m_0,\ldots,m_n)=0$).
Let $G(m_0,\ldots,m_n)=G_0(f(\phi_0),\ldots,f(\phi_k))$ for any $f:\N\to\N$ extending $(m_0,\ldots,m_n)$,
well-defined since $(m_0,\ldots,m_n)$ 
determines $\phi_0,\ldots,\phi_k$.

I claim $G$ guesses $S$.
Suppose $f\in S$.
By hypothesis, there is some $k_0$ such that $\forall k\geq k_0$, $G_0(f(\phi_0),\ldots,f(\phi_k))=1$.
By Lemma~\ref{qfdetermined}, we can find some $j_0$ such that 
$\phi_0,\ldots,\phi_{k_0}$ are all determined by $(f(0),\ldots,f(j_0))$.
Then for any $j\geq j_0$, $G(f(0),\ldots,f(j))=G_0(f(\phi_0),\ldots,f(\phi_k))$ for some $k\geq k_0$.
So for any such $j$, $G(f(0),\ldots,f(j))=1$, as desired.  The case $f\not\in S$ is similar.
\end{proof}

\begin{corollary}
\label{guessabilityrecast}
Let $S\subseteq\N^{\N}$.
Suppose Alice and Bob are playing natural numbers and neither can see the other's moves.
But at the start of the game, Bob is allowed to choose a countable subset $\Sigma$
of $\mathscr{L}_{\mbox{max}}$ and a listing $\phi_0,\phi_1,\ldots$ of the quantifier-free
$\Sigma$-sentences, and then, on his $n$th move, Bob is told whether or not $\phi_n$
holds of Alice's move-sequence (he is told this by a reliable third party, without
Alice's knowledge).  Suppose the winning conditions are as in Lemma~\ref{guessinggametheory}.
Then Bob has a winning strategy if and only if $S$ is guessable.
\end{corollary}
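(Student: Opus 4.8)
The plan is to treat this corollary as nothing more than the game-theoretic recasting of Proposition~\ref{guessabilityalternative}, in exactly the way Lemma~\ref{guessinggametheory} recast the basic definition of guessability. In both directions the argument will consist of translating between a strategy for Bob and the data $(\Sigma,\phi_0,\phi_1,\ldots,G)$ appearing in that proposition; as in the introduction, I will use that since Alice has no information her strategy is just a fixed move-sequence $f\in\N^{\N}$, so a winning strategy for Bob is one that succeeds against every such $f$, matching the ``for every $f$'' in the proposition.

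For the forward direction, I would suppose Bob has a winning strategy. Such a strategy first specifies a countable $\Sigma$ and a listing $\phi_0,\phi_1,\ldots$ of the quantifier-free $\Sigma$-sentences. The only information Bob ever receives is the sequence of truth values $f(\phi_0),f(\phi_1),\ldots$ against Alice's invisible sequence $f$, so his $n$th move is a function solely of $(f(\phi_0),\ldots,f(\phi_n))$; call this function $G\colon\{0,1\}^{<\N}\to\N$. The winning condition inherited from Lemma~\ref{guessinggametheory} says that Bob's moves are eventually always $1$ when $f\in S$ and eventually always $0$ when $f\notin S$; since a sequence of naturals converges to $1$ (resp.\ $0$) iff it is eventually constantly $1$ (resp.\ $0$), this is exactly the limit condition required of $\lim_n G(f(\phi_0),\ldots,f(\phi_n))$. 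Hence $\Sigma$, the listing, and $G$ witness the right-hand side of Proposition~\ref{guessabilityalternative}, so $S$ is guessable.

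For the converse, I would suppose $S$ is guessable and invoke Proposition~\ref{guessabilityalternative} to obtain $\Sigma$, a listing $\phi_0,\phi_1,\ldots$, and $G\colon\{0,1\}^{<\N}\to\N$ with the stated limit property. Bob's strategy is the obvious one: at the start he announces this $\Sigma$ and listing, and on his $n$th move, having been told $f(\phi_0),\ldots,f(\phi_n)$, he plays $G(f(\phi_0),\ldots,f(\phi_n))$. The limit property guarantees that against any $f$ Alice chooses, Bob's guesses converge to the correct answer, which by the same equivalence of convergence with eventual constancy means Bob meets the winning condition. Thus Bob has a winning strategy.

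The only point requiring care, and the closest thing to an obstacle, is matching the timing of the information to the arguments of $G$: I must confirm that when Bob makes his $n$th move he has already been told whether $\phi_n$ holds, so that the move may legitimately depend on $f(\phi_n)$ rather than only on $f(\phi_0),\ldots,f(\phi_{n-1})$. This is precisely the convention adopted in the statement (``on his $n$th move, Bob is told whether or not $\phi_n$ holds''), and it aligns the game with the indexing $G(f(\phi_0),\ldots,f(\phi_n))$ of Proposition~\ref{guessabilityalternative}, so no shift of indices is needed and the translation goes through verbatim.
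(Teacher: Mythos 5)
Your proposal is correct and follows essentially the same route as the paper's own proof: in both directions it translates between Bob's strategy and the data $(\Sigma,\phi_0,\phi_1,\ldots,G)$ of Proposition~\ref{guessabilityalternative}, reading off $G$ from the strategy (since Bob's $n$th move can depend only on the truth values he has been told) and conversely letting Bob play $G(f(\phi_0),\ldots,f(\phi_n))$. Your added remark about the indexing convention is a point the paper leaves implicit, but it does not change the argument.
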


\begin{proof}
If Bob has a winning strategy, let $\Sigma$ and $\phi_0,\phi_1,\ldots$ be as
dictated by that strategy.  For any $(p_0,\ldots,p_n)\in\{0,1\}^{<\N}$, let
$G(p_0,\ldots,p_n)$ be the move dictated by Bob's strategy assuming
Bob is told that Alice's move-sequence satisfies $\phi_i$ for each $p_i=1$ ($i\leq n$)
and $\neg\phi_i$ for each $p_i=0$ ($i\leq n$).

Conversely, suppose $S$ is guessable.  Let $\Sigma$, $\phi_0,\phi_1,\ldots$, and $G:\{0,1\}^{<\N}\to\N$
be as provided by Proposition~\ref{guessabilityalternative}.
A winning strategy for Bob is to choose $\Sigma$, $\phi_0,\phi_1,\ldots$ at the beginning,
and then always play $G(f(\phi_0),\ldots,f(\phi_n))$ where $f$ is Alice's move-sequence; he can do this
using the information he is given.
\end{proof}

\section{Higher-Order Guessability}

Proposition~\ref{guessabilityalternative} provides a way to generalize guessability, giving us a foothold into a hierarchy of
super-perfect information.
The $\Sigma_n$ and $\Pi_n$ formulas of a language are defined inductively: $\Sigma_0=\Pi_0=\Delta_0$ is the set of quantifier-free formulas
(bounded quantifiers not allowed); having defined $\Sigma_n$ and $\Pi_n$, let
\begin{align*}
\Sigma_{n+1}&=\{\exists x\,\phi\,:\,\mbox{$\phi\in\Pi_n$, $x$ any 
variable}\}\\
\Pi_{n+1}&=\{\forall x\,\phi\,:\,\mbox{$\phi\in\Sigma_n$, $x$ any 
variable}\}.
\end{align*}
An
$\mathscr{L}_{\mbox{max}}$
formula is $\Delta_{n+1}$ if it is equivalent (over all the models $\mathscr{M}_f$) to some $\Sigma_{n+1}$ formula and also to some 
$\Pi_{n+1}$ formula of $\mathscr{L}_{\mbox{max}}$.

\begin{definition}
\label{mthorderguessability}
Let $S\subseteq\N^{\N}$, $m\in\N$.  We say $S$ is \emph{$m$th-order 
guessable} if
there exists a countable set $\Sigma$ of 
$\mathscr{L}_{\mbox{max}}$-symbols,
a listing $\phi_0,\phi_1,\ldots$ of all $\Delta_m$ sentences of $\mathscr{L}_{\mbox{max}}\cap\Sigma$,
and a function $G:\{0,1\}^{<\N}\to\N$
such that for every $f:\N\to\N$,
\[
\lim_{n\to\infty} G(f(\phi_0),\ldots,f(\phi_n))
=\left\{\begin{array}{l}
\mbox{$1$, if $f\in S$;}\\ \mbox{$0$, if $f\not\in
S.$}\end{array}\right.\]
\end{definition}

Thus, Proposition~\ref{guessabilityalternative} can be restated as 
follows: ``$S\subseteq\N^{\N}$ is guessable if and only if it is 
$0$th-order guessable.''

\begin{lemma}
Modify the game in Corollary~\ref{guessabilityrecast}
by changing ``quantifier-free'' to ``$\Delta_m$''.  Then Bob has a winning strategy
if and only if $S$ is $m$th-order guessable.
\end{lemma}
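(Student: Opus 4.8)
The plan is to prove this exactly as Corollary~\ref{guessabilityrecast} was proved, since the only change in the game is that Bob now lists $\Delta_m$ sentences rather than quantifier-free ones, and correspondingly the witnessing data is precisely that of Definition~\ref{mthorderguessability} rather than of Proposition~\ref{guessabilityalternative}. The conceptual crux is the observation that, because Alice's lack of information forces her to commit to her entire move-sequence $f$ before play begins, a reliable third party with foreknowledge of $f$ can answer each query ``does $\phi_n$ hold of Alice's moves?''; the value $f(\phi_n)$ is well-defined regardless of the logical complexity of $\phi_n$, so nothing in the argument cares that $[\phi_n]$ need no longer be clopen. This is exactly the ``super-perfect'' information the hierarchy is meant to model.

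For the forward direction I would suppose Bob has a winning strategy and let $\Sigma$, together with the listing $\phi_0,\phi_1,\ldots$ of the $\Delta_m$ sentences of $\mathscr{L}_{\mbox{max}}\cap\Sigma$, be those dictated by the strategy. I would then define $G:\{0,1\}^{<\N}\to\N$ by letting $G(p_0,\ldots,p_n)$ be the $n$th move Bob's strategy makes when he is told that Alice's sequence satisfies $\phi_i$ for each $i\leq n$ with $p_i=1$ and $\neg\phi_i$ for each $i\leq n$ with $p_i=0$. The key verification is that, against any fixed $f$, the answers the third party supplies are precisely $f(\phi_0),f(\phi_1),\ldots$, so that Bob's $n$th move is exactly $G(f(\phi_0),\ldots,f(\phi_n))$. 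Since the strategy wins, these moves are eventually $1$ when $f\in S$ and eventually $0$ when $f\notin S$, which for an integer-valued sequence is precisely the limit condition of Definition~\ref{mthorderguessability}; hence $\Sigma$, the listing, and $G$ witness that $S$ is $m$th-order guessable.

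For the converse I would suppose $S$ is $m$th-order guessable and take $\Sigma$, the listing $\phi_0,\phi_1,\ldots$, and $G$ as furnished by Definition~\ref{mthorderguessability}. Bob's strategy is to announce $\Sigma$ and this listing at the outset, and on his $n$th move to play $G(f(\phi_0),\ldots,f(\phi_n))$, which he can compute from the truth-values the third party reveals to him. The limit condition guaranteed by the definition says exactly that these moves converge to $1$ on $S$ and to $0$ off $S$, which is Bob's winning condition in the sense of Lemma~\ref{guessinggametheory}.

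I do not expect a genuine obstacle: the lemma is essentially a restatement, and the proof is a bookkeeping translation between plays of the game and the data in the definition. The one point deserving a sentence of care --- and the only place the higher-order setting differs at all from the zeroth-order corollary --- is the remark above that the third party must have full foreknowledge of $f$ in order to decide the now-possibly-non-clopen sets $[\phi_n]$, which is legitimate precisely because Alice fixes $f$ in advance.
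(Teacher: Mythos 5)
Your proposal is correct and matches the paper's intent exactly: the paper's own proof of this lemma is the single word ``Immediate,'' precisely because Definition~\ref{mthorderguessability} is already phrased in the $\Sigma$-listing-plus-$G$ form, so the translation between strategies and witnessing data is the same bookkeeping as in Corollary~\ref{guessabilityrecast} with ``quantifier-free'' replaced by ``$\Delta_m$.'' Your writeup is simply the spelled-out version of that immediacy, including the correct observation that nothing in the argument depends on the sets $[\phi_n]$ being clopen.
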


\begin{proof}
Immediate.
\end{proof}

The main theorem of the paper will be that
$m$th-order guessability is equivalent to $\mathbf{\Delta}_{2}^{0}$ if $m=0$
or to $\mathbf{\Delta}_{m+1}^{0}$ if $m\not=0$.  We will begin
working toward that result now.

\begin{definition}
Let $\mathbf{\Delta}_2^{'}=\mathbf{\Delta}_2^0$.  For every $m>2$, define $\mathbf{\Delta}_m^{'}$ as follows:
a set $S\subseteq\N^{\N}$ is in $\mathbf{\Delta}_m^{'}$ if and only if $S$ is a countable union of
countable intersections of $\mathbf{\Delta}_{m-2}^0$ sets and also a countable intersection of countable
unions of $\mathbf{\Delta}_{m-2}^0$ sets.
\end{definition}

\begin{lemma}
\label{deltaprimecalculation}
If $m=2$ then $\mathbf{\Delta}_{m}^{'}=\mathbf{\Delta}_{2}^{0}$.  If $m>2$ then $\mathbf{\Delta}_{m}^{'}=\mathbf{\Delta}_{m-1}^{0}$.
\end{lemma}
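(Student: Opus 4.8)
The $m=2$ case is immediate, since $\mathbf{\Delta}_2^{'}$ was \emph{defined} to equal $\mathbf{\Delta}_2^0$. So the plan is to fix $m>2$, write $k=m-2\geq 1$, and establish $\mathbf{\Delta}_m^{'}=\mathbf{\Delta}_{k+1}^0$. The strategy is to collapse the inner layer of each two-layer description in the definition of $\mathbf{\Delta}_m^{'}$ down to a single Borel pointclass, at which point the definition reads off directly as $\mathbf{\Sigma}_{k+1}^0\cap\mathbf{\Pi}_{k+1}^0$.

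The key step is the claim that the sets expressible as a countable intersection of $\mathbf{\Delta}_k^0$ sets are \emph{exactly} the $\mathbf{\Pi}_k^0$ sets (and dually, the countable unions of $\mathbf{\Delta}_k^0$ sets are exactly the $\mathbf{\Sigma}_k^0$ sets). The inclusion from left to right is routine: $\mathbf{\Delta}_k^0\subseteq\mathbf{\Pi}_k^0$, and $\mathbf{\Pi}_k^0$ is closed under countable intersections. For the reverse inclusion I would split on $k$. When $k\geq 2$, every $\mathbf{\Pi}_k^0$ set is by definition a countable intersection of $\mathbf{\Sigma}_{k-1}^0$ sets, and $\mathbf{\Sigma}_{k-1}^0\subseteq\mathbf{\Delta}_k^0$, so it is in particular a countable intersection of $\mathbf{\Delta}_k^0$ sets. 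When $k=1$, $\mathbf{\Pi}_1^0$ is the class of closed sets and $\mathbf{\Delta}_1^0$ the class of clopen sets; here I would invoke the zero-dimensionality of Baire space, writing an open set as a countable union $\bigcup_i[\sigma_i]$ of basic clopen sets, so that its complement $\bigcap_i[\sigma_i]^c$ is a countable intersection of clopen sets. The dual statement about unions follows by complementation.

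Granting the claim, the remainder is bookkeeping. Since each countable intersection of $\mathbf{\Delta}_k^0$ sets is a $\mathbf{\Pi}_k^0$ set and conversely, the sets expressible as a countable union of countable intersections of $\mathbf{\Delta}_k^0$ sets are exactly the countable unions of $\mathbf{\Pi}_k^0$ sets, i.e.\ exactly $\mathbf{\Sigma}_{k+1}^0$. Symmetrically, the sets expressible as a countable intersection of countable unions of $\mathbf{\Delta}_k^0$ sets are exactly the countable intersections of $\mathbf{\Sigma}_k^0$ sets, i.e.\ exactly $\mathbf{\Pi}_{k+1}^0$. Intersecting the two conditions in the definition of $\mathbf{\Delta}_m^{'}$ then yields $\mathbf{\Delta}_m^{'}=\mathbf{\Sigma}_{k+1}^0\cap\mathbf{\Pi}_{k+1}^0=\mathbf{\Delta}_{k+1}^0=\mathbf{\Delta}_{m-1}^0$.

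I expect the only genuine obstacle to be the reverse inclusion in the key claim, and specifically its $k=1$ instance, since that is the one place where one must use a special feature of Baire space (every closed set is a countable intersection of clopen sets) rather than a formal containment in the Borel hierarchy. Everything else reduces to the definitions of $\mathbf{\Sigma}_n^0$, $\mathbf{\Pi}_n^0$ and their standard closure properties.
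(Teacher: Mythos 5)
Your proposal is correct and takes essentially the same route as the paper: both inclusions rest on the standard containments among $\mathbf{\Sigma}_n^0$, $\mathbf{\Pi}_n^0$, $\mathbf{\Delta}_n^0$ and their closure under countable unions/intersections, with the bottom case ($k=1$, i.e.\ $m=3$) handled exactly as in the paper by writing a closed subset of Baire space as a countable intersection of complements of basic (clopen) sets. Your packaging of the inner collapse as the claim that countable intersections of $\mathbf{\Delta}_k^0$ sets are precisely the $\mathbf{\Pi}_k^0$ sets (and dually for unions) is just a tidier statement of the manipulation the paper performs in-line.
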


\begin{proof}
The $m=2$ case is true by definition.  Suppose $m>2$.
First I claim $\mathbf{\Delta}_{m}^{'}\subseteq\mathbf{\Delta}_{m-1}^{0}$.
Suppose $S$ is $\mathbf{\Delta}_{m}^{'}$.
Then $S=\cap_{i\in\N}\cup_{j\in\N}D_{ij}=\cup_{i\in\N}\cap_{j\in\N}E_{ij}$ where the $D_{ij},E_{ij}$ are $\mathbf{\Delta}_{m-2}^{0}$.
In particular, the $D_{ij}$ are $\mathbf{\Sigma}_{m-2}^{0}$, so for every $i$, $\cup_{j\in\N}D_{ij}$ is $\mathbf{\Sigma}_{m-2}^0$.
Thus $S$ is $\mathbf{\Pi}_{m-1}^0$.  Similarly, since the $E_{ij}$ are $\mathbf{\Pi}_{m-2}^0$, $S$ is $\mathbf{\Sigma}_{m-1}^0$.  So $S$ is
$\mathbf{\Delta}_{m-1}^0$.

Conversely, suppose $S$ is $\mathbf{\Delta}_{m-1}^0$.  Then $S=\cup_{i\in\N}\cap_{j\in\N} S_{ij}=\cap_{i\in\N}\cup_{j\in\N}P_{ij}$ where the $S_{ij}$ 
are
$\mathbf{\Sigma}_{m-3}^0$ (or the $S_{ij}$ are basic-open if $m=3$) and the $P_{ij}$ are $\mathbf{\Pi}_{m-3}^0$
(or complements of basic-open if $m=3$).
So the $P_{ij}$ and $S_{ij}$ are $\mathbf{\Delta}_{m-2}^0$, which shows $S$ is $\mathbf{\Delta}_m^{'}$.
\end{proof}

Say that $S\subseteq\N^{\N}$ is \emph{defined} by an $\mathscr{L}_{\mbox{max}}$ sentence $\phi$
if $S=[\phi]$.
As an example, the set of surjections is defined by $\forall x\,\exists y\,\mathbf{f}(y)=x$.

My interest in defining Borel sets by formulas in a powerful language, as in the following lemma,
is partially influenced by Vanden Boom 
\cite{vandenboom} pp.~276--277.
In \cite{alexanderpreprint} I give a similar result using a weaker but slightly nonstandard language.

\begin{lemma}
\label{formulasforborel}
Let $S\subseteq\N^{\N}$.  For $n>0$, $S$ is $\mathbf{\Sigma}_n^0$ (resp.~$\mathbf{\Pi}_n^0$, $\mathbf{\Delta}_n^0$)
if and only if $S$ is defined by a $\Sigma_n$ (resp.~$\Pi_n$, $\Delta_n$) sentence of $\mathscr{L}_{\mbox{max}}$.
\end{lemma}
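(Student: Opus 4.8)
The plan is to treat the three pointclasses together by first making two reductions, and then proving the $\mathbf{\Sigma}_n^0$ case in both directions. First, the $\mathbf{\Delta}_n^0$ case follows formally from the other two: by definition a sentence is $\Delta_n$ exactly when it is equivalent over all the $\mathscr{M}_f$ to both a $\Sigma_n$ and a $\Pi_n$ sentence, and a set is $\mathbf{\Delta}_n^0$ exactly when it is both $\mathbf{\Sigma}_n^0$ and $\mathbf{\Pi}_n^0$; since $[\phi]=[\psi]=S$ forces $\phi$ and $\psi$ to be equivalent over all $\mathscr{M}_f$, the equivalences for $\Sigma$ and $\Pi$ glue together to give the one for $\Delta$. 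Second, the $\mathbf{\Pi}_n^0$ case is dual to the $\mathbf{\Sigma}_n^0$ case: pushing a negation inward turns a $\Sigma_n$ sentence $\phi$ into a $\Pi_n$ sentence $\phi'$ with $[\phi']=[\phi]^c$ (the quantifier-free matrix stays quantifier-free under $\neg$), and $\mathbf{\Pi}_n^0$ is by definition the class of complements of $\mathbf{\Sigma}_n^0$ sets. So it suffices to prove: for $n\geq 1$, $S$ is $\mathbf{\Sigma}_n^0$ if and only if $S=[\phi]$ for some $\Sigma_n$ sentence $\phi$.

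For the direction ($\Leftarrow$), I would prove the slightly stronger statement, by induction on $n$, that for every $\Sigma_n$ formula $\phi(x_1,\ldots,x_r)$ and all $k_1,\ldots,k_r\in\N$, the set $[\phi(\overline{k_1},\ldots,\overline{k_r})]$ is $\mathbf{\Sigma}_n^0$ (and dually for $\Pi_n$); carrying the free variables is needed so that the induction can peel off one quantifier at a time. The base case $n=0$ is Corollary~\ref{opensets}: a quantifier-free sentence defines a clopen set, which is both $\mathbf{\Sigma}_1^0$ and $\mathbf{\Pi}_1^0$. For the step, a $\Sigma_{n+1}$ formula is $\exists y\,\theta$ with $\theta\in\Pi_n$, and since the universe of each $\mathscr{M}_f$ is $\N$ and every element is named by some constant $\overline{j}$, one has the exact identity $[\exists y\,\theta]=\bigcup_{j\in\N}[\theta(\ldots,\overline{j})]$; by the induction hypothesis each term is $\mathbf{\Pi}_n^0$, so the union is $\mathbf{\Sigma}_{n+1}^0$. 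The $\Pi_{n+1}$ case is identical with intersections.

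For the harder direction ($\Rightarrow$), the key is the classical normal form for the finite Borel hierarchy on Baire space: for $n\geq 1$, a set $S$ is $\mathbf{\Sigma}_n^0$ if and only if there is a relation $R\subseteq\N^{<\N}\times\N^{n-1}$ with
\[
f\in S \iff \exists a_1\,\forall a_2\cdots Q_n a_n\; R\big(f\upharpoonright a_n,\; a_1,\ldots,a_{n-1}\big),
\]
where the $n$ quantifiers alternate starting with $\exists$, and the matrix depends on $f$ only through the initial segment $f\upharpoonright a_n$ whose length is the innermost quantified variable. This normal form I would establish by a short induction mirroring the very definition of the hierarchy (open sets give the base case via $R(\sigma)\iff N_\sigma\subseteq S$, and the step writes a $\mathbf{\Sigma}_{n+1}^0$ set as a union $\bigcup_b P_b$ of $\mathbf{\Pi}_n^0$ sets and absorbs the index $b$ into a new leading existential quantifier); crucially, because we are in the boldface hierarchy the relation $R$ may be completely arbitrary, so the uniformity over $b$ is free. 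Given the normal form, define $G:\N^{n-1}\times\N^{<\N}\to\N$ by $G(a_1,\ldots,a_{n-1},\tau)=1$ if $R(\tau,a_1,\ldots,a_{n-1})$ holds and $0$ otherwise. Then the atomic, quantifier-free formula $(G\circ\mathbf{f})(a_1,\ldots,a_{n-1},a_n)=\overline{1}$ holds in $\mathscr{M}_f$ exactly when $R(f\upharpoonright a_n,a_1,\ldots,a_{n-1})$ holds (up to a harmless off-by-one in the innermost length quantifier, which one absorbs by reindexing $R$). Prepending the same alternating quantifiers yields
\[
\phi\;:=\;\exists a_1\,\forall a_2\cdots Q_n a_n\;\big[(G\circ\mathbf{f})(a_1,\ldots,a_{n-1},a_n)=\overline{1}\big],
\]
which is literally a $\Sigma_n$ sentence (the single-quantifier-per-level shape in the definition matches the single alternating number-quantifiers of the normal form exactly) and satisfies $[\phi]=S$.

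I expect the main obstacle to be pinning down the normal form in precisely this shape, with the matrix reading $f$ only through the one initial segment indexed by the innermost variable. This is what allows the entire Borel combinatorics to collapse into a single quantifier-free atomic formula built from one $G\circ\mathbf{f}$ symbol, which is the one place the unusual richness of $\mathscr{L}_{\mbox{max}}$ is essential: the symbols $G\circ\mathbf{f}$ are exactly designed to let a quantifier-free term read a finite but unbounded, quantifier-chosen amount of $f$. By contrast, the uniformity that usually causes trouble in such arguments is automatic here, since a boldface pointclass imposes no definability constraint on $R$.
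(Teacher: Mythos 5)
Your proposal is correct and takes essentially the same route as the paper: the ($\Leftarrow$) direction is the same induction on formulas with Corollary~\ref{opensets} as the base case, and the ($\Rightarrow$) direction likewise encodes the alternating countable-union/intersection structure of a $\mathbf{\Sigma}_n^0$ set into a single quantifier-free matrix built from one $G\circ\mathbf{f}$ symbol under $n$ alternating quantifiers. The only difference is organizational: the paper indexes the basic (co-)open sets by all $n$ quantified variables and recovers the segment length through an auxiliary function symbol $\tilde{\ell}(x_1,\ldots,x_n)$, whereas you use the innermost quantified variable itself as the length via the classical normal form (proved by the same induction the paper implicitly relies on), with the off-by-one handled exactly as you indicate.
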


\begin{proof}
Write $[f_0]$ for the collection of infinite extensions of a finite sequence $f_0\in\N^{<\N}$.

($\Rightarrow$) Suppose $S$ is $\mathbf{\Sigma}_n^0$.
If $n$ is even, write
$S=\cup_{i_1\in\N}\cdots\cap_{i_n\in\N}[f_{i_1\cdots i_n}]^c$
where each $f_{i_1\cdots i_n}\in\N^{<\N}$ (we can assume the $f_{i_1\cdots i_n}$ are nonempty).
If $n$ is odd, write $S=\cup_{i_1\in\N}\cdots\cup_{i_n\in\N}[f_{i_1\cdots i_n}]$.
Let $\ell:\N^n\to\N$ be defined by letting $\ell(i_1,\ldots,i_n)$ be the length of
$f_{i_1\cdots i_n}$, minus 1.
Define $\tau:N^n\times \N^{<\N}\to\N$ by $\tau(i_1,\ldots,i_n,a_1,\ldots,a_k)=1$ if $(a_1,\ldots,a_k)=f_{i_1\cdots i_n}$,
$\tau=0$ everywhere else.
Then for any $f:\N\to\N$, $f$ extends $f_{i_1\cdots i_n}$
if and only if
\[\tau(i_1,\ldots, i_n, f(0), \ldots, f(\ell(i_1,\ldots, i_n)))=1.\]
So if $n$ is even, then $S$ is defined by the $\mathscr{L}_{\mbox{max}}$ sentence
\[
\exists x_1\cdots \forall x_n (\tau\circ\mathbf{f})(x_1,\ldots,x_n,\tilde{\ell}(x_1,\ldots,x_n))=\overline{0}.\]
And if $n$ is odd, then $S$ is defined by the $\mathscr{L}_{\mbox{max}}$ sentence
\[
\exists x_1\cdots \exists x_n (\tau\circ\mathbf{f})(x_1,\ldots,x_n,\tilde{\ell}(x_1,\ldots,x_n))=\overline{1}.\]
The case $\mathbf{\Pi}_n^0$ case is similar, and the $\mathbf{\Delta}_n^0$ case follows.

($\Leftarrow$) Induction on $n$.
For the base case,
suppose $S$ is defined by (say) the $\Sigma_1$ sentence $\exists x\,\phi$
where $\phi$ is quantifier-free.
Corollary~\ref{opensets} ensures
$[\phi(x|\overline{i})]$
is clopen for any $i\in\N$.
Thus 
$S=\cup_{i\in\N}
[\phi(x|\overline{i})]$
is open, so $\mathbf{\Sigma}_1^0$.
Similarly for the $\Pi_1$ and $\Delta_1$ cases.
With the base case done, the induction case is straightforward.
\end{proof}

\begin{proposition}
\label{piece1}
If $S\subseteq\N^{\N}$ is $\mathbf{\Delta}_{m+2}^{'}$ then $S$ is $m$th-order guessable.
\end{proposition}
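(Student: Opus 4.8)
The plan is to exploit the double representation built directly into the definition of $\mathbf{\Delta}_{m+2}^{'}$. For $m\geq 1$, Lemma~\ref{deltaprimecalculation} identifies this class with $\mathbf{\Delta}_{m+1}^0$, and the defining clause hands us sets $E_{ij},D_{ij}\in\mathbf{\Delta}_m^0$ with
\[
S=\bigcup_{i}\bigcap_{j}E_{ij}=\bigcap_{i}\bigcup_{j}D_{ij},
\]
while Lemma~\ref{formulasforborel} supplies $\Delta_m$ sentences $\varepsilon_{ij},\delta_{ij}$ defining them. For the boundary case $m=0$, where $\mathbf{\Delta}_2^{'}=\mathbf{\Delta}_2^0$ and Lemma~\ref{formulasforborel} does not reach $n=0$, I would instead invoke that Baire space is zero-dimensional: a $\mathbf{\Sigma}_2^0$ set is a countable union of closed sets, each a countable intersection of complements of basic open sets, and dually a $\mathbf{\Pi}_2^0$ set is a countable intersection of open sets, each a countable union of basic open sets. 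Since a basic open set $[s]$ and its complement are defined by quantifier-free (that is, $\Delta_0$) $\mathscr{L}_{\mbox{max}}$-sentences, this yields the same double representation with $\Delta_0$-definable $E_{ij},D_{ij}$. So in every case I secure the representation above together with defining $\Delta_m$ sentences.

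Next I would let $\Sigma$ collect the countably many symbols occurring in the sentences $\varepsilon_{ij},\delta_{ij}$, fix an enumeration $\phi_0,\phi_1,\ldots$ of all $\Delta_m$ sentences of $\mathscr{L}_{\mbox{max}}\cap\Sigma$ (which then contains every $\varepsilon_{ij}$ and $\delta_{ij}$), and design the guesser $G$ as a \emph{race} between the two representations. Reading the revealed truth values $(f(\phi_0),\ldots,f(\phi_n))$, I would let $\alpha(n)$ be the least $i$ not yet \emph{refuted} on the union-of-intersections side, meaning the least $i$ such that no $\varepsilon_{ij}$ occurring among $\phi_0,\ldots,\phi_n$ has truth value $0$; symmetrically, $\beta(n)$ is the least $i$ such that no $\delta_{ij}$ seen so far has truth value $1$. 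Only finitely many sentences have appeared, so both minima exist. The guesser outputs $1$ when $\alpha(n)<\beta(n)$ and $0$ otherwise.

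To verify that $G$ guesses $S$, note that exactly one of the conditions ``$\exists i\,\forall j\ f\in E_{ij}$'' (equivalent to $f\in S$) and ``$\exists i\,\forall j\ f\notin D_{ij}$'' (equivalent to $f\notin S$, read off the second representation) can hold. If $f\in S$, the least witness $i^{*}$ on the $E$-side is never refuted, whereas each smaller $i$ has some $j$ with $f\notin E_{ij}$ and is refuted once $\varepsilon_{ij}$ appears; hence $\alpha(n)$ stabilizes at $i^{*}$. Meanwhile the $D$-side condition fails, so every $i$ meets some $j$ with $f\in D_{ij}$ and is eventually refuted, forcing $\beta(n)\to\infty$; thus eventually $\alpha(n)<\beta(n)$ and $G\to 1$. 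The case $f\notin S$ is symmetric, with $\beta(n)$ stabilizing and $\alpha(n)\to\infty$, so $G\to 0$.

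The main obstacle is this convergence bookkeeping: one must check that refutation is \emph{monotone}—an index once refuted stays refuted as further sentences are revealed—so that $\alpha$ and $\beta$ genuinely converge, and that it is the simultaneous possession of \emph{both} representations, the defining feature of $\mathbf{\Delta}^{'}$, that forces exactly one of the two sides to stabilize while the other diverges. A secondary, more routine obstacle is the $m=0$ boundary, which must be handled separately because $\mathbf{\Delta}_0^0$ and the $n=0$ instance of Lemma~\ref{formulasforborel} are unavailable; the zero-dimensionality argument sketched above is what closes that gap.
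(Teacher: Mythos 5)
Your proposal is correct and takes essentially the same approach as the paper's own proof: your race between $\alpha(n)$ and $\beta(n)$, outputting $1$ exactly when the least unrefuted index on the $\cup\cap$ side is smaller than that on the $\cap\cup$ side, is precisely the paper's comparison $\mu<\nu$, and your $m=0$ fallback to basic open sets and their complements with quantifier-free defining sentences matches the paper's Case 2. The only cosmetic difference is that you detect refutation by reading truth value $0$ of $\varepsilon_{ij}$ itself, whereas the paper watches for truth value $1$ of the negated sentence $\neg\sigma_{ij}$ in the listing.
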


\begin{proof}
Case 1: $m>0$.  Then $S$ is a $\cup\cap$ of $\mathbf{\Delta}_m^0$ sets
and also a $\cap\cup$ of $\mathbf{\Delta}_m^0$ sets.
Write $S=\cup_{i\in\N}\cap_{j\in\N}D_{ij}=\cap_{i\in\N}\cup_{j\in\N}E_{ij}$ where the $D_{ij}$, $E_{ij}$ are
$\mathbf{\Delta}_m^0$.
By Lemma~\ref{formulasforborel}, we may find $\Delta_m$ sentences $\sigma_{ij}$ defining each $D_{ij}$, and $\Delta_m$
sentences $\tau_{ij}$ defining each $E_{ij}$.
Let $\Sigma$ be the (countable) set of $\mathscr{L}_{\mbox{max}}$ symbols
appearing in the $\sigma_{ij}$ and $\tau_{ij}$.
Let $\phi_0,\phi_1,\ldots$ be any listing of all the $\Delta_m$ sentences of $\mathscr{L}_{\mbox{max}}\cap\Sigma$.
I shall define a function $G:\{0,1\}^{<\N}\to\N$ which will witness the $m$th-order guessability of $S$.

I'll define $G$ in terms of two functions $\mu,\nu:\{0,1\}^{<\N}\to\N$.
Suppose $(p_0,\ldots,p_n)\in\{0,1\}^{<\N}$.
Say that a sentence $\phi_i$ \emph{appears} if $i\leq n$ and $p_i=1$.
Let $\mu(p_0,\ldots,p_n)$ be the minimum $x\in\N$ such that there is no $y\in\N$ such that
$\neg \sigma_{xy}$ appears.  Let $\nu(p_0,\ldots,p_n)$ be the minimum $x\in\N$ such that there is no
$y\in\N$ such that $\tau_{xy}$ appears.
Finally, let $G(p_0,\ldots,p_n)=1$ if $\mu(p_0,\ldots,p_n)<\nu(p_0,\ldots,p_n)$ and let $G(p_0,\ldots,p_n)=0$
otherwise.

I claim $\Sigma,\phi_0,\phi_1,\ldots,G$ witnesses the $m$th-order guessability of $S$.
First, suppose $f\in S$.  I must show $\lim_{n\to\infty}G(f(\phi_0),\ldots,f(\phi_n))=1$.
Since $f\in S=\cup_{i\in\N}\cap_{j\in\N}D_{ij}$, we have $f\in\cap_{j\in\N}D_{ij}$ for some $i$.
So $f\in D_{ij}$ for every $j$.  Thus $\mathscr{M}_f\models \sigma_{ij}$ for every $j$,
and thus $\neg\sigma_{ij}$ cannot \emph{appear} in the definition of $\mu(f(\phi_0),\ldots,f(\phi_n))$
for any $n$.  Thus $\mu$ is bounded above by $i$.
I claim $\lim_{n\to\infty}\nu(f(\phi_0),\ldots,f(\phi_n))=\infty$, which will show that $\nu$ is
eventually always above $\mu$ and thus that $G$ converges to $1$.
It is enough to let $i\in\N$ be arbitrary and show $\nu(f(\phi_0),\ldots,f(\phi_n))\not=i$ for all
$n$ sufficiently large.
Well, $S^c=\cup_{i\in\N}\cap_{j\in\N}E_{ij}^c$, and $f\not\in S^c$,
so for any arbitrary $i\in\N$, there is some $j$ such that $f\in E_{ij}$, whence $\mathscr{M}_f\models \tau_{ij}$.
Thus, for any $n$ large enough that $\phi_0,\ldots,\phi_n$ includes $\tau_{ij}$,
$\tau_{ij}$ \emph{appears} in the definition of $\nu(f(\phi_0),\ldots,f(\phi_n))$, so $\nu(f(\phi_0),\ldots,f(\phi_n))\not=i$.
There is such a sufficiently large $n$, because $\tau_{ij}$ is $\Delta_m$.

A similar argument shows that $\lim_{n\to\infty} G(f(\phi_0),\ldots,f(\phi_n))=0$ if $f\not\in S$.

Case 2: $m=0$.  This case is similar to Case 1, but instead of writing $S$ as a
$\cup\cap$
of $\mathbf{\Delta}_{m}^0$ sets, write it as a
$\cup\cap$
of complements of basic open sets.  And instead of writing $S$ as a
$\cap\cup$
of $\mathbf{\Delta}_m^0$ sets, write it as a $\cap\cup$ of
basic open sets.  Then take the $\tau_{ij}$ and $\sigma_{ij}$ to be quantifier-free formulas in the obvious way.
\end{proof}

\begin{proposition}
\label{piece2}
If $S\subseteq\N^{\N}$ is $m$th-order guessable, then $S$ is $\mathbf{\Delta}_{m+2}^{'}$.
\end{proposition}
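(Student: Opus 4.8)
The plan is to express $S$ directly from the guessing data, exploiting the fact that convergence in the discrete space $\N$ means eventual constancy. Suppose $S$ is $m$th-order guessable, witnessed by a countable $\Sigma$, a listing $\phi_0,\phi_1,\ldots$ of the $\Delta_m$ sentences of $\mathscr{L}_{\mbox{max}}\cap\Sigma$, and a function $G:\{0,1\}^{<\N}\to\N$. For each $n$ I would set
\[
A_n=\{f\in\N^{\N}\,:\,G(f(\phi_0),\ldots,f(\phi_n))=1\}.
\]
The first key step is to check that each $A_n$ is $\mathbf{\Delta}_m^0$ when $m>0$ (and clopen when $m=0$). This holds because $G(f(\phi_0),\ldots,f(\phi_n))$ depends on $f$ only through the finite tuple of truth values $(f(\phi_0),\ldots,f(\phi_n))$, so $A_n$ is the \emph{finite} union, taken over those binary strings $s$ of length $n+1$ with $G(s)=1$, of the sets $\bigcap_{s_i=1}[\phi_i]\cap\bigcap_{s_i=0}[\phi_i]^c$. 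Each $[\phi_i]$ is $\mathbf{\Delta}_m^0$ by Lemma~\ref{formulasforborel} (or clopen by Corollary~\ref{opensets} when $m=0$), and $\mathbf{\Delta}_m^0$ is closed under finite Boolean combinations, so $A_n$ lies in the same class.

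The second step is to observe that, since $\N$ carries the discrete topology, $\lim_{n}G(f(\phi_0),\ldots,f(\phi_n))=1$ holds precisely when the sequence is eventually equal to $1$, i.e.\ when $f\in\bigcup_N\bigcap_{n\geq N}A_n$; dually, the limit is $0$ precisely when $f\in\bigcup_N\bigcap_{n\geq N}A_n^c$. By the defining property of the guesser the limit is $1$ on $S$ and $0$ off $S$, so we obtain simultaneously
\[
S=\bigcup_{N}\bigcap_{n\geq N}A_n\qquad\text{and}\qquad S^c=\bigcup_{N}\bigcap_{n\geq N}A_n^c.
\]
Complementing the second identity gives $S=\bigcap_{N}\bigcup_{n\geq N}A_n$. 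Thus $S$ is simultaneously a $\cup\cap$ and a $\cap\cup$ of the sets $A_n$.

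Finally I would read off the conclusion. For $m>0$, each $A_n$ is $\mathbf{\Delta}_m^0$, so the two representations exhibit $S$ as a countable union of countable intersections of $\mathbf{\Delta}_m^0$ sets and also as a countable intersection of countable unions of $\mathbf{\Delta}_m^0$ sets, which is exactly the definition of $\mathbf{\Delta}_{m+2}^{'}$. For $m=0$, each $A_n$ is clopen, so $\bigcap_{n\geq N}A_n$ is closed and $S=\bigcup_N\bigcap_{n\geq N}A_n$ is $\mathbf{\Sigma}_2^0$, while dually $S=\bigcap_N\bigcup_{n\geq N}A_n$ is $\mathbf{\Pi}_2^0$; hence $S\in\mathbf{\Delta}_2^0=\mathbf{\Delta}_2^{'}$, matching $\mathbf{\Delta}_{m+2}^{'}$ for $m=0$.

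I expect no serious obstacle here: the argument is essentially unwinding the definition of convergence and invoking closure of $\mathbf{\Delta}_m^0$ under finite Boolean operations. The one point deserving care is the verification that each $A_n$ remains in $\mathbf{\Delta}_m^0$, where Lemma~\ref{formulasforborel} and the Boolean-algebra property of $\mathbf{\Delta}_m^0$ do the work; in particular one should confirm that the decomposition of $A_n$ into cases on the binary string $(f(\phi_0),\ldots,f(\phi_n))$ is genuinely a \emph{finite} Boolean combination at each level $n$, so that no increase in complexity occurs before the outer countable operations are applied.
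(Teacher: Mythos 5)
Your proof is correct and follows essentially the same route as the paper's: the same sets $A_n$, the same finite Boolean decomposition over binary strings justified by Lemma~\ref{formulasforborel} (or Corollary~\ref{opensets} when $m=0$), and the same pair of $\cup\cap$ and $\cap\cup$ representations. The only cosmetic difference is that you obtain the $\cap\cup$ form by complementing the identity $S^c=\bigcup_N\bigcap_{n\geq N}A_n^c$, whereas the paper notes directly that, since the limit always exists, convergence to $1$ is equivalent to equaling $1$ infinitely often --- the same underlying observation.
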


\begin{proof}
Suppose $S$ is $m$th-order guessable.  There is a countable set $\Sigma$ of $\mathscr{L}_{\mbox{max}}$ symbols
and a listing $\phi_0,\phi_1,\ldots$ of all the $\Delta_m$ sentences of $\mathscr{L}_{\mbox{max}}\cap\Sigma$,
and a function $G:\{0,1\}^{<\N}\to\N$ which witnesses the $m$th-order guessability of $S$.
For any $f:\N\to\N$, $f\in S$ if and only if $G(f(\phi_0),\ldots,f(\phi_n))$ is eventually always $1$.
Thus we can write
\begin{align*}
S &=
\bigcup_{i\in\N} \bigcap_{j>i} \, \{f \,:\, G(f(\phi_0),\ldots,f(\phi_j))=1 \}\\
&=
\bigcup_{i\in\N} \bigcap_{j>i}
\,\,\,\,\,\,\,\,\,
\bigcup_{\mbox{$0\leq a_1,\ldots,a_j\leq 1$ and $G(\vec{a})=1$}}
\,\,\,\,\,
\bigcap_{0\leq k\leq j} \, \{f \,:\, f(\phi_k)=a_k\}.
\end{align*}
Now, I claim each set $\{f\,:\,f(\phi_k)=a_k\}$ is $\mathbf{\Delta}_m^0$ if $m>0$, or is clopen if $m=0$.
This is because if $a_k=1$ then $\{f\,:\,f(\phi_k)=a_k\}=\{f\,:\,\mathscr{M}_f\models\phi_k\}$, and if $a_k=0$ then
$\{f\,:\,f(\phi_k)=a_k\}=\{f\,:\,\mathscr{M}_f\models\neg\phi_k\}$.
Either way, we have a $\mathbf{\Delta}_m^0$ set by Lemma~\ref{formulasforborel}
(or a clopen set by Corollary~\ref{opensets}, if $m=0$).
Since $\mathbf{\Delta}_m^0$ is closed under \emph{finite} unions and intersections (as are the clopen sets), I have shown $S$ is a countable 
union
of countable intersections of $\mathbf{\Delta}_m^0$ sets (or of clopen sets if $m=0$).

For the dual situation, note for any $f:\N\to\N$,
saying $G(f(\phi_0),\ldots,f(\phi_n))\rightarrow 1$ is equivalent to saying
$G(f(\phi_0),\ldots,f(\phi_n))=1$ for \emph{infinitely many} values of $n$,
because $\lim_{n\to\infty}G(f(\phi_0),\ldots,f(\phi_n))$ must exist by
Definition~\ref{mthorderguessability}.
Thus
\[
S = \bigcap_{i\in\N} \bigcup_{j>i} \, \{f \,:\, G(f(\phi_0),\ldots,f(\phi_j))=1 \}.\]
Thus $S$ is a countable intersection of countable unions of $\mathbf{\Delta}_m^0$ sets (or of clopen sets if $m=0$).
Put together, $S$ is $\mathbf{\Delta}_{m+2}^{'}$.
\end{proof}

\begin{theorem}
The $0$th-order guessable sets are exactly the $\mathbf{\Delta}_2^{0}$ sets,
and for $m>0$, the $m$th-order guessable sets are exactly the $\mathbf{\Delta}_{m+1}^0$ sets.
\end{theorem}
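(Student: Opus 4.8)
The plan is to assemble the three preceding results, since together they already pin down $m$th-order guessability exactly. First I would invoke Propositions~\ref{piece1} and~\ref{piece2} in tandem: the former shows that every $\mathbf{\Delta}_{m+2}^{'}$ set is $m$th-order guessable, and the latter gives the converse. Combined, these yield the clean equivalence that, for every $m\in\N$, a set $S\subseteq\N^{\N}$ is $m$th-order guessable if and only if $S\in\mathbf{\Delta}_{m+2}^{'}$. This is the whole substance of the matter, and it has already been carried out; the only remaining task is to translate the auxiliary pointclass $\mathbf{\Delta}_{m+2}^{'}$ back into the standard Borel hierarchy.

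For that translation I would appeal to Lemma~\ref{deltaprimecalculation}, being careful to apply it with its index set equal to $m+2$ rather than $m$. When $m=0$, the index is $2$, and the lemma's first clause gives $\mathbf{\Delta}_{m+2}^{'}=\mathbf{\Delta}_2^{'}=\mathbf{\Delta}_2^0$; hence the $0$th-order guessable sets are precisely the $\mathbf{\Delta}_2^0$ sets. When $m>0$, the index $m+2$ exceeds $2$, so the lemma's second clause applies and gives $\mathbf{\Delta}_{m+2}^{'}=\mathbf{\Delta}_{(m+2)-1}^0=\mathbf{\Delta}_{m+1}^0$; hence for $m>0$ the $m$th-order guessable sets are precisely the $\mathbf{\Delta}_{m+1}^0$ sets. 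These two cases are exactly the two clauses of the theorem.

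I do not anticipate any real obstacle here, as all the hard analytic and logical work resides in the two Propositions and in Lemma~\ref{deltaprimecalculation}. The sole point demanding attention is the off-by-one bookkeeping in the index: one must remember that $m$th-order guessability corresponds to $\mathbf{\Delta}_{m+2}^{'}$ (not $\mathbf{\Delta}_m^{'}$), so that the boundary case of Lemma~\ref{deltaprimecalculation} occurs at $m=0$ rather than at $m=2$. Keeping that shift straight is what produces the asymmetric statement, with the anomalous $\mathbf{\Delta}_2^0$ appearing at $m=0$ and the uniform $\mathbf{\Delta}_{m+1}^0$ pattern holding for all larger $m$.
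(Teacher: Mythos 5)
Your proposal is correct and follows exactly the paper's own argument: combine Propositions~\ref{piece1} and~\ref{piece2} to identify the $m$th-order guessable sets with $\mathbf{\Delta}_{m+2}^{'}$, then apply Lemma~\ref{deltaprimecalculation} at index $m+2$ to recover $\mathbf{\Delta}_2^0$ for $m=0$ and $\mathbf{\Delta}_{m+1}^0$ for $m>0$. Your explicit attention to the index shift (applying the lemma at $m+2$, not $m$) is precisely the bookkeeping the paper leaves implicit.
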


\begin{proof}
By combining Propositions~\ref{piece1} and \ref{piece2}, for any $m$, the $m$th-order guessable
sets are exactly the $\mathbf{\Delta}_{m+2}^{'}$ sets.
The theorem now follows by Lemma~\ref{deltaprimecalculation}.
\end{proof}

In the proof of Proposition~\ref{piece2} we actually proved slightly more than we needed, which
leads to an unexpected standalone corollary.

\begin{corollary}
If $S=\cup_{i\in\N}\cap_{j\in\N}X_{ij}=\cap_{i\in\N}\cup_{j\in\N}Y_{ij}$,
where the $X_{ij}$ and $Y_{ij}$ are $\mathbf{\Delta}_{n}^0$, $0<n\in\N$,
then there is a single family $Z_{ij}$ of $\mathbf{\Delta}_n^0$
sets such that \[S=\cup_{i\in\N}\cap_{j\in\N}Z_{ij}=\cap_{i\in\N}\cup_{j\in\N}Z_{ij}.\]
\end{corollary}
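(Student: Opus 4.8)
The plan is to route everything through guessability, using the remark preceding the corollary: in the proof of Proposition~\ref{piece2} the two representations of $S$ were extracted from one and the same sequence of sets. First I would note that the hypothesis is verbatim the statement that $S\in\mathbf{\Delta}_{n+2}^{'}$, since $n>0$ forces $n+2>2$ and the definition of $\mathbf{\Delta}_{n+2}^{'}$ asks exactly for $S$ to be a $\cup\cap$ of $\mathbf{\Delta}_n^0$ sets and also a $\cap\cup$ of $\mathbf{\Delta}_n^0$ sets. Applying Proposition~\ref{piece1} with $m=n$ then yields that $S$ is $n$th-order guessable, so we obtain a countable $\Sigma$, a listing $\phi_0,\phi_1,\ldots$ of the $\Delta_n$ sentences of $\mathscr{L}_{\mbox{max}}\cap\Sigma$, and a witnessing $G:\{0,1\}^{<\N}\to\N$.

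Next I would rerun the construction in the proof of Proposition~\ref{piece2} on this guesser, writing $A_j:=\{f:G(f(\phi_0),\ldots,f(\phi_j))=1\}$. That proof shows each $A_j$ is $\mathbf{\Delta}_n^0$ (each $\{f:f(\phi_k)=a_k\}$ is $\mathbf{\Delta}_n^0$ by Lemma~\ref{formulasforborel}, and $\mathbf{\Delta}_n^0$ is closed under finite unions and intersections), and---this is the ``slightly more'' being invoked---its two displayed equalities exhibit the single family $(A_j)$ in both roles:
\[
S=\bigcup_{i\in\N}\bigcap_{j>i}A_j=\bigcap_{i\in\N}\bigcup_{j>i}A_j.
\]

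Finally I would convert this to the required double-indexed family by the purely bookkeeping device $Z_{ij}:=A_{i+1+j}$. Each $Z_{ij}$ is $\mathbf{\Delta}_n^0$, and for each fixed $i$ one has $\bigcap_{j\in\N}Z_{ij}=\bigcap_{k>i}A_k$ and $\bigcup_{j\in\N}Z_{ij}=\bigcup_{k>i}A_k$; taking $\bigcup_i$ of the first and $\bigcap_i$ of the second reproduces the two equalities above, giving $S=\bigcup_i\bigcap_j Z_{ij}=\bigcap_i\bigcup_j Z_{ij}$. I do not expect a real obstacle: the analytic content was already supplied by Propositions~\ref{piece1} and~\ref{piece2}. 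The only thing demanding attention is the observation that the proof of Proposition~\ref{piece2} genuinely shares the family $(A_j)$ between its $\cup\cap$ and $\cap\cup$ formulas rather than producing two different families---exactly the point that the preceding remark is signaling.
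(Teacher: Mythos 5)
Your proposal is correct and is essentially the paper's own proof: the paper likewise observes that the hypothesis says $S\in\mathbf{\Delta}_{n+2}^{'}$, applies Proposition~\ref{piece1} to get $n$th-order guessability, and then invokes the proof of Proposition~\ref{piece2}, whose two displayed decompositions share the single family $A_j=\{f:G(f(\phi_0),\ldots,f(\phi_j))=1\}$. Your explicit re-indexing $Z_{ij}=A_{i+1+j}$ just fills in bookkeeping that the paper leaves implicit.
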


\begin{proof}
By Proposition~\ref{piece1}, $S$ is $n$th-order guessable.  Proposition~\ref{piece2} gives $Z_{ij}$.
\end{proof}

\section{Acknowledgements}

I want to thank Amit K.~Gupta, Steven VanDendriessche, Timothy J.~Carlson, and Dasmen Teh for
much useful feedback, and especially Mr.~Gupta for catching some mistakes 
in an earlier draft.

\end{document}